\documentclass[a4paper,12pt,reqno]{amsart}
\usepackage[utf8]{inputenc}
\usepackage{amsmath}
\usepackage{amsthm,amssymb,amsfonts}
\usepackage{graphicx}
\usepackage{hyperref}
\usepackage{url}

\usepackage{geometry}
\geometry{a4paper,top=3cm,bottom=3cm,left=3cm,right=3cm,%
	heightrounded,bindingoffset=0mm}

\usepackage{array,bm}
\usepackage{color}
\usepackage{verbatim} 

\newtheorem{theorem}{Theorem}

\newtheorem{remark}[theorem]{Remark}

\newtheorem{lemma}[theorem]{Lemma}

\begin{document}
\title[Non-radial Galbis via Nicola-Tilli]{Norm bounds for self-adjoint Toeplitz operators with non-radial symbols on the Fock space}
\author{Yi C. Huang} 
\address{Yunnan Key Laboratory of Modern Analytical Mathematics and Applications, Yunnan Normal University, Kunming 650500, People's Republic of China}
\address{Department of Mathematical Sciences, Tsinghua University, Beijing 100084, People's Republic of China}
\address{School of Mathematical Sciences, Nanjing Normal University, Nanjing 210023, People's Republic of China}
\email{Yi.Huang.Analysis@gmail.com}
\urladdr{https://orcid.org/0000-0002-1297-7674}
\author{Jian-Yang Zhang}
\address{School of Mathematical Sciences, Nanjing Normal University, Nanjing 210023, People's Republic of China}
\email{3203927393@qq.com}
\subjclass{Primary 47B35; Secondary 30H20}
\keywords{Toeplitz operators, Fock space, Gaussian isoperimetric inequalities, operator norm bounds, extremal problems}
\thanks{Research of the authors is partially supported by the National NSF grant of China (no. 11801274), 
	the Visiting Scholar Program from the Department of Mathematical Sciences of Tsinghua University,
	and the Open Project from Yunnan Normal University (no. YNNUMA2403). YCH thanks Shunlong Luo, Congwen Liu and Youjiang Lin for helpful communications.}
\maketitle
\begin{abstract}{In this paper we extend Galbis' elegant norm bounds for self-adjoint Toeplitz operators on the Fock space to bounded and integrable symbols which are non-radial. The main ingredients are a transplantation of the remarkable Nicola-Tilli isoperimetric inequality to the realm of Fock-Toeplitz operator theory and a two-dimensional adaption of Galbis' integration and approximation arguments.}\end{abstract}

\section{Introduction}
Using the Hermite expansion, Galbis obtained in \cite{galbis2022norm} inspiring norm bounds for self-adjoint Toeplitz operators with radial, bounded and integrable symbols on the Fock space defined on the complex plane. See also Grudsky and Vasilevski \cite{grudsky2002toeplitz} for a related result. In this paper we extend Galbis' norm bounds to the non-radial case. The main ingredient is a Gaussian isoperimetric inequality established recently by Nicola and Tilli in \cite{nicola2022faber}, and for readers' convenience we recall it as follows.
\begin{theorem}[Nicola-Tilli, \cite{nicola2022faber}]\label{thm NT}
	For every $f\in{\mathcal F}^2({\mathbb C})$ with $||f||_{{\mathcal{F}}}=1$ and every measurable set $\Omega\subset\mathbb{C}$ of finite measure, there holds
		$$\int_{\Omega}|f(z)|^2e^{-\pi|z|^2}\,dA(z)\le1-e^{-|\Omega|},$$
	where $|\Omega|$ is the Lebesgue measure of $\Omega$.
\end{theorem}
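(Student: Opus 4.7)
The plan is to transplant the classical Gaussian isoperimetric strategy into the Fock setting by exploiting the distinguished structure of the density $F(z) := |f(z)|^2 e^{-\pi|z|^2}$. First, I would record two pointwise facts about $F$: the reproducing kernel estimate $|f(z)|^2 \le \|f\|_{\mathcal{F}}^2\, e^{\pi|z|^2}$ combined with the normalization $\|f\|_{\mathcal{F}} = 1$ gives $F \le 1$ on $\mathbb{C}$, while the same normalization gives $\int_{\mathbb{C}} F\,dA = 1$. The extremal case to keep in mind is $f = k_a/\|k_a\|$, the normalized reproducing kernel at $a \in \mathbb{C}$, for which $F(z) = e^{-\pi|z-a|^2}$ and equality in the theorem is attained by $\Omega = B(a,r)$; this pins down the correct ODE comparison.

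Next, I would perform a rearrangement reduction. Since $F$ is upper semicontinuous with $\int F\,dA = 1$, the bathtub principle implies that for each $t > 0$ the supremum of $\int_\Omega F\,dA$ over measurable sets $\Omega$ with $|\Omega| = t$ is attained by the super-level set $\{F > F^*(t)\}$, where $F^*$ denotes the decreasing rearrangement of $F$ on $[0,\infty)$. Setting $\Phi(t) := \int_0^t F^*(s)\,ds$, it suffices to establish $\Phi(t) \le 1 - e^{-t}$ for every $t \ge 0$. Since $\Phi(0) = 0$, this target is equivalent, by the integrating factor $e^t$, to the a.e.\ differential inequality $\Phi'(t) + \Phi(t) \le 1$. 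Reparameterizing by the level $c = F^*(t)$ so that $|\{F > c\}| = t$, this becomes the equivalent level-set statement
$$c \;\le\; \int_{\{F \le c\}} F\,dA \qquad \text{for every } c \in (0,1].$$

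The main obstacle, and the heart of the Nicola--Tilli argument, is establishing this last inequality. Here the Fock structure enters decisively: writing $\log F = 2\log|f| - \pi|z|^2$ and invoking the Poincar\'e--Lelong formula yields, distributionally on $\mathbb{C}$,
$$\Delta \log F \;=\; 4\pi\!\!\!\sum_{z_k \in f^{-1}(0)}\!\!\!\delta_{z_k} \;-\; 4\pi \;\ge\; -4\pi,$$
so that $F$ is log-subharmonic modulo the Gaussian defect $-4\pi$. I would couple this PDE information with the coarea formula $-\tfrac{d}{dc}|\{F > c\}| = \int_{\{F=c\}}|\nabla F|^{-1}\,dH^1$, with the divergence identity $\int_{\{F=c\}} |\nabla F|\,dH^1 = -\int_{\{F>c\}}\Delta F\,dA$ on super-level sets, and with the Cauchy--Schwarz inequality on the level curve $\{F = c\}$. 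Using the algebraic identity $\Delta F = F\bigl(\Delta \log F + |\nabla \log F|^2\bigr)$ together with $\Delta \log F \ge -4\pi$, one produces a closed differential inequality for $c \mapsto \int_{\{F>c\}} F\,dA$ whose integration returns $1 - e^{-t}$. The genuine difficulty is two-fold: cleanly handling the zero set of $f$ (where $\log F = -\infty$ and $\nabla \log F$ blows up, requiring a regularization and a passage to the limit), and closing the Cauchy--Schwarz loop sharply enough to reach the \emph{sharp} exponential constant rather than a lossy variant. Once these are in place, the ODE comparison delivers $\Phi(t) \le 1 - e^{-t}$ and hence the theorem.
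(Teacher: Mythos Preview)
The paper does not actually prove this statement. Theorem~\ref{thm NT} is quoted from Nicola and Tilli \cite{nicola2022faber} as a black-box input; the section heading ``Proof of Theorem~\ref{thm NT}'' is a typo for ``Proof of Theorem~\ref{th:main}'', as the section's content is Lemma~\ref{lemma finte e} followed by the derivation of the norm bound \eqref{eq norm bound}, ending with ``Thus the proof of Theorem~\ref{th:main} is finished.'' There is therefore no in-paper proof to compare your proposal against.

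That said, your sketch is a faithful outline of the original Nicola--Tilli argument: the bathtub reduction to super-level sets, the reformulation as the ODE inequality $\Phi'(t)+\Phi(t)\le 1$ (equivalently $c\le\int_{\{F\le c\}}F\,dA$, which is sharp for the Gaussian $F=e^{-\pi|z-a|^2}$), and the use of $\Delta\log F\ge -4\pi$ together with coarea, the divergence theorem on super-level sets, and Cauchy--Schwarz on level curves. You correctly flag the two genuine technical hurdles --- the regularization near the zeros of $f$ and closing the Cauchy--Schwarz step without loss --- but you do not carry them out; as written, the proposal is a roadmap rather than a proof. If your intent was to supply what the paper omits, you would need to fill in precisely that level-set differential inequality, since everything else is soft. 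If your intent was to match the paper, be aware that the paper simply cites the result and moves on.
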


\begin{remark}
For more recent progresses following this remarkable result (in different background geometry or its stability), we mention for example Kulikov \cite{kulikov2022functionals}, Frank \cite{frank2023sharp}, and G\'omez-Guerra-Ramos-Tilli \cite{gomez2024stability} (and the references therein).
\end{remark}

In the statement of Theorem \ref{thm NT}, the Fock (or Bargmann-Fock) space ${\mathcal F}^2({\mathbb C})$ is the Hilbert space consisting of those entire functions $f$ such that
$$
\|f\|_{{\mathcal F}}^2 = \int_{{\mathbb C}}|f(z)|^2 e^{-\pi |z|^2}\ dA(z) < +\infty,
$$ where $dA(z)$ denotes the Lebesgue measure. It is well known that ${\mathcal F}^2({\mathbb C})$ admits a reproducing kernel $K(z,w) = e^{\pi \overline{w}z},$ which means that $$f(w)=\int_{{\mathbb C}}f(z)\overline{K(z,w)}e^{-\pi |z|^2}\ dA(z),\ \ f\in {\mathcal F}^2({\mathbb C}).$$  
 For simplicity we denote $d\lambda(z) = e^{-\pi |z|^2} dA(z),$ so ${\mathcal F}^2({\mathbb C})$ is a closed subspace of $L^2({\mathbb C}, d\lambda).$ The induced orthogonal projection 
$$
\mathbb{P}:L^2({\mathbb C}, d\lambda)\to {\mathcal F}^2({\mathbb C})$$ is the integral operator 
$$
\left(\mathbb{P}f\right)(z) = \int_{{\mathbb C}} f(w)\overline{K(w,z)}\ d\lambda(w).$$ For a measurable and bounded function $\varphi$ on ${\mathbb C}$, the Toeplitz operator associated with symbol $\varphi$ is defined as 
$$
T_\varphi(f)(z) = \mathbb{P}(\varphi f)(z) = \int_{{\mathbb C}} \varphi(w)f(w)\overline{K(w,z)}\ d\lambda(w).$$ 
It is obvious that $$T_\varphi:{\mathcal F}^2({\mathbb C})\to {\mathcal F}^2({\mathbb C})$$ is a bounded operator and 
$$
\|T_\varphi(f)\|_{\mathcal{F}} \leq \|\varphi f\|_{L^2({\mathbb C}, d\lambda)} \leq \|\varphi\|_\infty\cdot \|f\|_{\mathcal{F}}.$$ 
The Toeplitz operator defined by a real valued symbol $\varphi$ is self-adjoint. This can be seen via the following pairing identity

$$
\langle T_\varphi(f), g\rangle = \int_{{\mathbb C}} \varphi(z) f(z)\overline{g(z)}d\lambda(z)$$ for all $f,g\in {\mathcal F}^2({\mathbb C}).$ In this case we have
\begin{equation}\label{obeq}
	\|T_\varphi\| = \sup_{\|f\|_{\mathcal{F}}=1}\left|\langle T_\varphi(f), f\rangle\right| \leq \sup_{\|f\|_{\mathcal{F}}=1}\int_{{\mathbb C}} |\varphi(z)|\cdot |f(z)|^2d\lambda(z).
\end{equation}

In order to estimate $||T_\varphi||$, we only need to estimate
\begin{equation*}
	\int_{{\mathbb C}} |\varphi(z)|\cdot |f(z)|^2d\lambda(z),
\end{equation*}
where $f\in\mathcal{F}^2(\mathbb{C})$ such that $||f||_{\mathcal{F}}=1$.
Using Theorem \ref{thm NT} of Nicola and Tilli, and adapting \cite[Lemma 2]{galbis2022norm} of Galbis, we obtain the following result.
\begin{theorem}\label{th:main} Let $\varphi\in L^1({\mathbb C})\cap L^\infty({\mathbb C})$ be a real-valued symbol. Then 
	\begin{equation}\label{eq norm bound}
		\|T_\varphi\|\leq \|\varphi\|_\infty \left(1 - \exp\left(-\frac{\|\varphi\|_1}{\|\varphi\|_\infty}\right)\right).
	\end{equation}
	
\end{theorem}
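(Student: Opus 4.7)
My starting point is the reduction \eqref{obeq}, which turns the problem into bounding
$$\int_{\mathbb C}|\varphi(z)|\,|f(z)|^2\,d\lambda(z)$$
uniformly in unit-norm $f\in\mathcal F^2(\mathbb C)$. The idea is to plug Theorem \ref{thm NT} into a layer-cake decomposition of $|\varphi|$ and then close the estimate by exploiting the concavity of $s\mapsto 1-e^{-s}$ via Jensen's inequality.

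Set $M:=\|\varphi\|_\infty$ and $\Omega_t:=\{z\in\mathbb C:|\varphi(z)|>t\}$. First, the pointwise identity $|\varphi(z)|=\int_0^M \mathbf 1_{\Omega_t}(z)\,dt$ together with Tonelli gives
$$\int_{\mathbb C}|\varphi(z)|\,|f(z)|^2\,d\lambda(z)=\int_0^M \int_{\Omega_t}|f(z)|^2\,d\lambda(z)\,dt.$$
Since $\varphi\in L^1(\mathbb C)$, Chebyshev's inequality forces $|\Omega_t|\le\|\varphi\|_1/t<\infty$ for every $t>0$, so Theorem \ref{thm NT} applies slice-by-slice and delivers
$$\int_{\Omega_t}|f(z)|^2\,d\lambda(z)\le 1-e^{-|\Omega_t|}.$$
Recalling the distribution identity $\int_0^M|\Omega_t|\,dt=\|\varphi\|_1$, the concavity of $s\mapsto 1-e^{-s}$ combined with Jensen's inequality applied to normalized Lebesgue measure on $[0,M]$ gives
$$\int_0^M\bigl(1-e^{-|\Omega_t|}\bigr)\,dt\le M\left(1-\exp\!\left(-\frac{1}{M}\int_0^M|\Omega_t|\,dt\right)\right) = M\left(1-\exp\!\left(-\frac{\|\varphi\|_1}{M}\right)\right),$$
which, upon taking the supremum over unit-norm $f$, is exactly \eqref{eq norm bound}.

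\textbf{Main obstacle.} Essentially all the analytic weight is absorbed by Theorem \ref{thm NT}; the remaining moves (layer cake, Tonelli, a one-line Jensen estimate) are routine. What makes the scheme go through in the non-radial regime, where Galbis' Hermite expansion argument breaks down, is precisely that Nicola--Tilli imposes no symmetry on the competitor $f$ or on the test set $\Omega$. The only technical point to watch is the finiteness of $|\Omega_t|$ for $t>0$, which is immediate from $\varphi\in L^1(\mathbb C)$; alternatively, one can first establish the estimate for the truncations $\varphi\,\mathbf 1_{\{|\varphi|\ge\varepsilon\}}$ and then let $\varepsilon\downarrow 0$, in the spirit of Galbis' approximation argument mentioned in the abstract.
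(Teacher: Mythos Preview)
Your argument is correct and takes a genuinely different route from the paper. The paper first proves a discrete lemma (its Lemma~\ref{lemma finte e}, an adaptation of Galbis' Lemma~2): for finitely many disjoint sets $\Omega_k$ and weights $0\le\varepsilon_k\le1$, one has
\[
\sum_k\varepsilon_k\int_{\Omega_k}|f|^2\,d\lambda\le 1-\exp\Bigl(-\sum_k\varepsilon_k|\Omega_k|\Bigr),
\]
established by an induction on the number of $\varepsilon_k$ lying strictly in $(0,1)$, examining endpoints and the unique critical point of an auxiliary function of $\varepsilon$. The theorem then follows by applying this to simple functions and passing to the limit via $L^1$-approximation, using the pointwise bound $|f(z)|^2e^{-\pi|z|^2}\le1$. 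Your layer-cake plus Jensen argument replaces both steps at once: the continuous decomposition $|\varphi|=\int_0^M\mathbf 1_{\Omega_t}\,dt$ sidesteps the simple-function approximation, and Jensen for the concave map $s\mapsto 1-e^{-s}$ is precisely the continuous analogue of the paper's inductive inequality (indeed, the critical-point computation in Lemma~\ref{lemma finte e} is, in disguise, the discrete Jensen estimate). What the paper's approach buys is a closer parallel to Galbis' original template and a self-contained lemma for simple symbols; what your approach buys is brevity and transparency, and it makes the equality case (constant $|\Omega_t|$, i.e.\ $|\varphi|$ a multiple of an indicator) visible directly from the equality condition in Jensen.
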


\begin{remark}
	Examing the statements in Nicola and Tilli \cite{nicola2022faber}, it is also transparent that the equality in \eqref{eq norm bound} is attained by choosing $\varphi$ as the indicator function of a disc with measure $\|\varphi\|_1$ and the operator norm in \eqref{obeq} is saturated by $f(z)=K(z,w_0)/K(w_0,w_0)$ with $w_0$ being the center of that disc. 
\end{remark}

\section{Proof of Theorem \ref{thm NT}}
First, we prove the following simple but very useful integration lemma which is a two-dimensional adaption of \cite[Lemma 2]{galbis2022norm}.
\begin{lemma}\label{lemma finte e}
	Let $(\Omega_k)_{k=1}^N$ be disjoint sets contained in $\mathbb{C}$ with finite measure and $0\leq \varepsilon_k\leq 1$ for every $1\leq k\leq N.$ Then, for every $f\in {\mathcal F}^2(\mathbb{C})$ such that $||f||_{{\mathcal F}}=1$ we have
	$$
	\sum_{k=1}^N \varepsilon_k\int_{\Omega_k}|f(z)|^2e^{-\pi|z|^2}\ dA(z)\leq 1 - \exp\left(-\sum_{k=1}^N \varepsilon_k |\Omega_k|\right).$$
\end{lemma}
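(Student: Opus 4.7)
The plan is to exploit the monotonicity structure in Theorem \ref{thm NT} through a layer-cake representation and then collapse the resulting one-parameter family of estimates by Jensen's inequality. First, I would set $u(z) = |f(z)|^2 e^{-\pi|z|^2}$ (a probability density on $\mathbb{C}$ by the normalization $\|f\|_{\mathcal F}=1$) and introduce the weighted indicator $\psi(z) = \sum_{k=1}^N \varepsilon_k \mathbf{1}_{\Omega_k}(z)$. Since the $\Omega_k$ are disjoint and each $\varepsilon_k\in[0,1]$, $\psi$ takes values in $[0,1]$, so its super-level sets $E_t := \{z\in\mathbb{C} : \psi(z) > t\}$ are non-empty only for $t\in[0,1)$ and each has finite measure (being contained in $\bigcup_k \Omega_k$).

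Next, by the layer-cake identity $\psi = \int_0^1 \mathbf{1}_{E_t}\,dt$ and Fubini's theorem,
$$\sum_{k=1}^N \varepsilon_k \int_{\Omega_k} u\,dA \;=\; \int_{\mathbb{C}} \psi u\,dA \;=\; \int_0^1 \Big(\int_{E_t} u\,dA\Big)\,dt.$$
For each $t$ the set $E_t$ is measurable with finite Lebesgue measure, so Theorem \ref{thm NT} applies and yields $\int_{E_t} u\,dA \le 1 - e^{-|E_t|}$. Inserting this slice-wise bound shows that the quantity of interest is at most $\int_0^1 (1 - e^{-|E_t|})\,dt$.

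I would finish with Jensen's inequality for the concave function $g(x) = 1 - e^{-x}$ on $[0,\infty)$, using Lebesgue measure on $[0,1]$ as a probability measure:
$$\int_0^1 g(|E_t|)\,dt \;\le\; g\!\left(\int_0^1 |E_t|\,dt\right) \;=\; 1 - \exp\!\left(-\int_0^1 |E_t|\,dt\right).$$
Another application of layer cake gives $\int_0^1 |E_t|\,dt = \int_{\mathbb{C}} \psi\,dA = \sum_{k=1}^N \varepsilon_k |\Omega_k|$, and the claimed inequality drops out.

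The argument has no real technical obstacle once the overall shape is spotted; the only conceptual step is recognizing that the weighted sum should be disassembled into super-level sets so that Nicola-Tilli can be applied slice by slice, and then reassembled through the concavity of $g$. It is worth remarking that the hypothesis $\varepsilon_k\le 1$ is essential: were $\psi$ only bounded by some $M>1$, the Jensen step would degrade to $M\!\left(1 - e^{-\sum_k\varepsilon_k|\Omega_k|/M}\right)$, which is strictly weaker than the target bound and would not suffice for the intended application to Theorem \ref{th:main}.
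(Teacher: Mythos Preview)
Your argument is correct and takes a genuinely different route from the paper. The paper proceeds by induction on the number of indices $k$ with $0<\varepsilon_k<1$: the base case is Theorem~\ref{thm NT} applied to the union of the $\Omega_k$, and the inductive step freezes all but one of the intermediate $\varepsilon_k$, treats the remaining one as a variable on $[0,1]$, and verifies the resulting auxiliary function is at most $1$ both at the endpoints (by the induction hypothesis) and at any interior critical point (via the elementary bound $1+x\le e^x$). Your layer-cake/Jensen approach replaces this case analysis by a single concavity step: you disassemble $\psi=\sum_k\varepsilon_k\mathbf{1}_{\Omega_k}$ into its super-level sets, apply Theorem~\ref{thm NT} slice by slice, and reassemble via Jensen for $g(x)=1-e^{-x}$. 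The advantage of your method is that it is shorter, avoids the critical-point computation, and in fact works verbatim for any measurable $\psi:\mathbb{C}\to[0,1]$ with $\psi\in L^1$, yielding $\int_{\mathbb C}\psi\,|f|^2e^{-\pi|z|^2}\,dA\le 1-e^{-\|\psi\|_1}$ directly; this makes the simple-function approximation in the paper's proof of Theorem~\ref{th:main} unnecessary. The paper's induction, on the other hand, stays entirely within finite combinatorics and does not invoke Jensen, which some readers may find more elementary. Your closing remark on the role of the constraint $\varepsilon_k\le 1$ is accurate and explains why the normalization by $\|\varphi\|_\infty$ in Theorem~\ref{th:main} is not merely cosmetic.
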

\begin{proof}
	We denote by $n$ the number of indices $k$ such that $0 < \varepsilon_k < 1$ and we proceed by induction on $n.$ For $n = 0$, by Theorem \ref{thm NT} we have
	 $$\sum_{k=1}^N \int_{\Omega_k}|f(z)|^2e^{-\pi|z|^2}\ dA(z)\leq 1 - \exp\left(-\sum_{k=1}^N|\Omega_k|\right).$$

	 Let us now assume $n = 1.$ Let $1\leq j\leq N$ be the coordinate with the property that $0 < \varepsilon_j <1 $ and  define function $\psi$ by:
	\begin{align*}
		\psi(\varepsilon):= &\sum_{k\neq j}\int_{\Omega_k}|f(z)|^2e^{-\pi|z|^2}\ dA(z) + \varepsilon \int_{\Omega_j}|f(z)|^2e^{-\pi|z|^2}\ dA(z) \\
		&+ \exp\left(-\sum_{k\neq j}|\Omega_k| - \varepsilon |\Omega_j|\right),\,\, \qquad0 \leq \varepsilon \leq 1.
	\end{align*}
	    We aim to prove $\psi(\varepsilon)\le1$. In fact, $\psi(0) \leq 1$ and $\psi(1)\leq 1$ follow from Theorem \ref{thm NT}. Moreover, the critical point $\varepsilon_0$ of $\psi$ satisfies 
	
	\[\int_{\Omega_j}|f(z)|^2e^{-\pi|z|^2}\ dA(z) = |\Omega_j|\exp\left(-\sum_{k\neq j}|\Omega_k| - \varepsilon_0 |\Omega_j|\right).\] Hence
	
	\begin{align*}
		\psi(\varepsilon_0) & = \sum_{k\neq j}\int_{\Omega_k}|f(z)|^2e^{-\pi|z|^2}\ dA(z) + \varepsilon_0|\Omega_j|\exp\left(-\sum_{k\neq j}|\Omega_k| - \varepsilon_0 |\Omega_j|\right) \\ & \\ & \qquad+ \exp\left(-\sum_{k\neq j}|\Omega_k| - \varepsilon_0 |\Omega_j|\right)\\ & \\ & = \sum_{k\neq j}\int_{\Omega_k}|f(z)|^2e^{-\pi|z|^2}\ dA(z) + \left(1 + \varepsilon_0|\Omega_j|\right)\exp\left(-\sum_{k\neq j}|\Omega_k| - \varepsilon_0 |\Omega_j|\right).
	\end{align*}
	Since 
	$$
	1 + \varepsilon_0|\Omega_j| \leq \exp\left(\varepsilon_0 |\Omega_j|\right)$$ we conclude 
	$$
	\psi(\varepsilon_0) \leq \sum_{k\neq j}\int_{\Omega_k}|f(z)|^2e^{-\pi|z|^2}\ dA(z) + \exp\left(-\sum_{k\neq j}|\Omega_k|\right) \leq 1.$$ 
	\par\medskip
	Let us assume that the lemma holds for $n = \ell$ ($0\leq \ell < N$). Then we consider $n = \ell +1$ and the function $\psi:[0,1]^{\ell+1}\to {\mathbb R}$ defined by 
	\begin{align*}
	\psi({\bm\varepsilon}):= &\sum_{k=1}^{\ell+1}\varepsilon_k \int_{\Omega_k}|f(z)|^2e^{-\pi|z|^2}\ dA(z) + \sum_j\int_{\Omega_j}|f(z)|^2e^{-\pi|z|^2}\ dA(z)\\& \qquad+ \exp\left(-\sum_k\varepsilon_k |\Omega_k| - \sum_j |\Omega_j|\right)
	\end{align*}
	 for ${\bm \varepsilon} = (\varepsilon_1, \ldots, \varepsilon_{\ell+1}).$ The induction hypothesis means that $\psi({\bm \varepsilon}) \leq 1$ whenever ${\bm \varepsilon}$ is in the boundary of $[0,1]^{\ell+1}.$ The lemma is proved after checking $\psi({\bm \varepsilon}_0) \leq 1,$ where ${\bm \varepsilon_0}$ is a critical point of $\psi.$ Proceeding as before,
	$$
	\begin{array}{*2{>{\displaystyle}l}}
		\psi({\bm \varepsilon}_0) & = \left(\sum_{k=1}^{\ell+1}\varepsilon_k |\Omega_k| + 1\right)e^{-\sum_k \varepsilon_k |\Omega_k|} e^{-\sum_j |\Omega_j|} + \sum_j \int_{\Omega_j}|f(z)|^2e^{-\pi|z|^2}\ dA(z) \\ & \\ & \leq \exp\left(-\sum_j |\Omega_j|\right) + \sum_j \int_{\Omega_j}|f(z)|^2e^{-\pi|z|^2}\ dA(z) \leq 1.
	\end{array}	$$
	
By induction, we finish the proof of Lemma \ref{lemma finte e}.\end{proof}
Next we give a proof of Theorem \ref{th:main}.
\begin{proof}
	 After replacing $\varphi$ by $\frac{\varphi}{\|\varphi\|_\infty}$ if necessary we can assume that $\|\varphi\|_\infty = 1.$ According to \eqref{obeq}, we aim to prove that 
	$$
	\int_{\mathbb C}|\varphi(z)|\left|f(z)\right|^2 e^{-\pi|z|^2}\ dA(z) \leq 1 - \exp(-||\varphi||_1)$$ for every entire function $f\in\mathcal{F}^2(\mathbb{C})$ such that $||f||_{\mathcal{F}}= 1.$
	Let us first assume 
	\begin{equation}\label{eq:stepfunction}
		\varphi = \sum_{k=1}^N \varepsilon_k \chi_{\Omega_k},\ \left|\varepsilon_k\right|\leq 1,
	\end{equation} where $(\Omega_k)_{k=1}^N$ are disjoint measurable sets in $\mathbb{C}$ with finite measure. Then, Lemma \ref{lemma finte e} gives 
	$$
	\begin{array}{*2{>{\displaystyle}l}}
		\int_{\mathbb C}|\varphi(z)|\left|f(z)\right|^2 e^{-\pi|z|^2}\ dA(z) & = \sum^{N}_{k=1 }|\varepsilon_k|\int_{\Omega_k}|f(z)|^2e^{-\pi|z|^2}\ dA(z)\\ & \\ & \le1 - \exp\left(-\sum_{k=1}^N |\varepsilon_k| |\Omega_k|\right)\\ & \\ & = 1 - \exp\left(-\|\varphi\|_1\right).\end{array}$$ Theorem \ref{th:main} is proved for $\varphi$ as in (\ref{eq:stepfunction}). Let us now assume that $\|\varphi\|_\infty = 1$ and $\varphi\in L^1({\mathbb C}).$ Since $\varphi\in L^1(\mathbb{C})$, there is a sequence $(\varphi_n)_n$ of simple functions as in (\ref{eq:stepfunction}) such that $$\lim_{n\to\infty}\int_\mathbb{C} |\varphi_n(z)-\varphi(z)|\ dA(z) = 0.$$ 
	By triangle inequality, we have
	\begin{align*}
		\int_{\mathbb C}|\varphi(z)|\left|f(z)\right|^2 e^{-\pi|z|^2}\ dA(z)&\le\int_{\mathbb C}|\varphi_n(z)|\left|f(z)\right|^2 e^{-\pi|z|^2}\ dA(z)\\\notag
		&\qquad+\int_{\mathbb C}|\varphi(z)-\varphi_n(z)|\left|f(z)\right|^2 e^{-\pi|z|^2}\ dA(z)\\\notag
		&\le1 - \exp\left(-\|\varphi_n\|_1\right)\\\notag
		&\qquad+\int_{\mathbb C}|\varphi(z)-\varphi_n(z)|\left|f(z)\right|^2 e^{-\pi|z|^2}\ dA(z).
	\end{align*}
	Because for a function $f\in\mathcal{F}^2$ such that $||f||_{\mathcal{F}}=1$, we have 
	\[|f(z)|^2e^{-\pi|z|^2}\le1,\]
	see for example \cite[Theorem 2.7]{zhu2012analysis} or \cite[Proposition 2.1]{nicola2022faber}. So 
	\[\int_{\mathbb C}|\varphi(z)-\varphi_n(z)|\left|f(z)\right|^2 e^{-\pi|z|^2}\ dA(z)\le\int_\mathbb{C} |\varphi_n(z)-\varphi(z)|\ dA(z).\]
	Hence we obtain 
	\[\int_{\mathbb C}|\varphi(z)|\left|f(z)\right|^2 e^{-\pi|z|^2}\ dA(z)\le1 - \exp\left(-\|\varphi_n\|_1\right)+\int_\mathbb{C} |\varphi_n(z)-\varphi(z)|\ dA(z).\]
	Letting $n\rightarrow\infty$, we have
	\[\int_{\mathbb C}|\varphi(z)|\left|f(z)\right|^2 e^{-\pi|z|^2}\ dA(z)\le1 - \exp\left(-\|\varphi\|_1\right).\]
	According to \eqref{obeq}, this implies
	$$
	\|T_\varphi\|\leq 1 - \exp\left(-\|\varphi\|_1\right).$$
	Thus the proof of Theorem \ref{th:main} is finished.
	\end{proof}
	
	\bigskip
	
	\section*{\textbf{Compliance with ethical standards}}
	
	\bigskip
	
	\textbf{Conflict of interest} The authors have no known competing financial interests
	or personal relationships that could have appeared to influence this reported work.
	
	\bigskip
	
	\textbf{Availability of data and material} Not applicable.
	
	\bigskip

\bibliographystyle{abbrv}
\bibliography{toep}

\begin{thebibliography}{1}

\bibitem{frank2023sharp}
R.~L. Frank.
\newblock Sharp inequalities for coherent states and their optimizers.
\newblock {\em Advanced Nonlinear Studies}, 23(1):20220050, 2023.

\bibitem{galbis2022norm}
A.~Galbis.
\newblock Norm estimates for selfadjoint {T}oeplitz operators on the {F}ock
  space.
\newblock {\em Complex Analysis and Operator Theory}, 16(1):15, 2022.

\bibitem{gomez2024stability}
J.~G{\'o}mez, A.~Guerra, J.~P.~G. Ramos, and P.~Tilli.
\newblock Stability of the {F}aber-{K}rahn inequality for the short-time
  {F}ourier transform.
\newblock {\em Inventiones Mathematicae}, 236(2):779--836, 2024.

\bibitem{grudsky2002toeplitz}
S.~M. Grudsky and N.~L. Vasilevski.
\newblock Toeplitz operators on the {F}ock space: radial component effects.
\newblock {\em Integral Equations and Operator Theory}, 44(1):10--37, 2002.

\bibitem{kulikov2022functionals}
A.~Kulikov.
\newblock Functionals with extrema at reproducing kernels.
\newblock {\em Geometric and Functional Analysis}, 32(4):938--949, 2022.

\bibitem{nicola2022faber}
F.~Nicola and P.~Tilli.
\newblock The {F}aber--{K}rahn inequality for the short-time {F}ourier
  transform.
\newblock {\em Inventiones {M}athematicae}, 230(1):1--30, 2022.

\bibitem{zhu2012analysis}
K.~Zhu.
\newblock {\em Analysis on {F}ock spaces}, volume 263 of {\em Graduate Texts in
  Mathematics}.
\newblock Springer, 2012.

\end{thebibliography}

\end{document}